\newtheorem{thm}{Theorem}[section]
\newtheorem{cor}[thm]{Corollary}
\newtheorem{lem}[thm]{Lemma}
\newtheorem{exm}[thm]{Example}
\newtheorem{prop}[thm]{Proposition}
\theoremstyle{definition}
\newtheorem{defn}[thm]{Definition}
\theoremstyle{remark}
\newtheorem{rem}[thm]{\bf Remark}
\newcommand{\mo}{\mbox{-{\rm mod}}}
\newcommand{\proj}{\mbox{-{\rm proj}}}
\newcommand{\db}{\mathbf{D}^{\mathrm{b}}}
\newcommand{\kb}{\mathbf{K}^{\mathrm{b}}}
\newcommand{\cm}{\mathbf{C}}
\newcommand{\C}{\mathbf{C}}
\numberwithin{equation}{section}
\begin{document}


\baselineskip=17pt


\title[Derived Representation Type and Field Extensions]{Derived Representation Type and Field Extensions}

\author[J. Li]{Jie Li}
\address{Department of Mathematics, 
	University of Science and Technology of China, Hefei, Anhui, PR China}
\email{lijie0@mail.ustc.edu.cn}

\author[C. Zhang]{Chao Zhang}
\address{Department of Mathematics,
School of Mathematics and Statistics,
Guizhou University,
550025, Guiyang,
PR China}
\email{zhangc@amss.ac.cn}

\date{}

\begin{abstract}
Let $A$ be a finite-dimensional algebra over a field $k$. We define $A$ to be $\C$-dichotomic if it has the dichotomy property of the representation type on the category of certain bounded complexes of projective $A$-modules. If $k$ admits a finite separable field extension $K/k$ such that $K$ is algebraically closed, the real number field for example, we prove that $A$ is $\C$-dichotomic. As a consequence, the second derived Brauer-Thrall type theorem holds for $A$, i.e., $A$ is either derived-discrete or strongly derived-unbounded.
\end{abstract}

\subjclass[2020]{Primary 16G10; Secondary 16E35}

\keywords{$\C$-dichotomic algebra, derived-discrete algebra, finite separable extension, strongly unbounded type.}

\maketitle

\section{Introduction}
The representation type is an important topic in representation theory of algebras, which studies the classification and distribution of the indecomposable modules. The most stimulating problems, the classical Brauer-Thrall type conjectures, were formulated for finite-dimensional $k$-algebras; see \cite{Br41, Th47, Jans57}. The first Brauer-Thrall type conjecture says that, an algebra either is of finite representation type or admits modules of arbitrary large dimensions. The second one states that any algebra is either of finite representation type or of strongly unbounded representation type. Two conjectures were proved for algebras over infinite perfect fields, see \cite{ASS06, Aus74, NR75, Ro68, Ro78, Rin80} and references therein. 

The Brauer-Thrall type theorems also relate with the celebrated tame-wild dichotomy theorem first proved by Drozd for finite-dimensional modules over finite-dimensional $K$-algebras over an algebraically closed field $K$; see \cite{Dro86}. The tame-wild dichotomy theorem is generalized for Cohen-Macauley modules in \cite{DG92} and for a class of bimodule matrix problems (over not only algebraically closed fields) in \cite{Sim97,Sim05}.

For algebras over algebraically closed fields, the derived representation type was pioneered by Vossieck \cite{Vo01}. He defined the derived-discrete algebras and classified them into two types: the algebras derived equivalent to hereditary algebras of finite type, and gentle one-cycle algebras not satisfying the {\it clock condition}. To study the Brauer-Thrall type theorems for derived categories, in \cite{ZH16}, some numerical invariants were introduced and strongly derived-unbounded algebras were defined naturally by Zhang and Han \cite{ZH16}. Then they prove in \cite{ZH16} the two derived Brauer-Thrall type theorems for algebras over algebraically closed fields. Moreover, the first one is proved by Zhang in \cite{Zh16b} for arbitrary Artin algebras. We mention that the tame-wild dichotomy for bounded derived categories of finite-dimensional algebras is established in \cite{BD03}.

The second derived Brauer-Thrall type theorem says that any finite-dimensional algebra $A$ has dichotomy property on the level of derived category, i.e., $A$ is either derived-discrete or strongly derived-unbounded. Strongly derived-unbounded algebras are studied more extensively, and dichotomy properties of representation types on levels of complex category and homotopy category for finite-dimensional algebras are obtained \cite{Zh16a}. Note that the dichotomy properties on three levels, including the second derived Brauer-Thrall type theorem, rely heavily on the classification of derived-discrete algebras, where the base field is required to be algebraically closed.

It is natural to ask if the second derived Brauer-Thrall type theorem and other dichotomy properties still hold for algebras over arbitrary infinite fields. Since the classification of derived-discrete algebras over arbitrary fields is unknown, our method is to consider if the properties are compatible under field extensions. Many properties about module categories, such as representation type \cite{JL82} and Auslander-Reiten theory \cite{Ka00}, are compatible under ground field extensions. In \cite{Li19}, derived-discreteness is proved to be compatible under finite separable field extensions.

In this paper, we revisit the notions about the representation type on complex categories, homotopy categories and derived categories for finite-dimensional algebras over arbitrary fields. Let $A$ be a finite-dimensional $k$-algebra. Denote by $\cm_m(A\proj)$ the category consisting of homotopy minimal complexes of projective $A$-modules which are concentrated between degree $0$ and $m$. An algebra $A$ is called $\C$-dichotomic if either $\cm_m(A\proj)$ is of finite representation type for each $m\geq 1$ or $\cm_{m'}(A\proj)$ is of strongly unbounded type for some $m'\geq 1$.

We then prove that $\C$-dichotomic algebras are preserved under finite separable field extensions. Making use of the dichotomy theorem (see \cite[Corollary 2.9]{Zh16a}) for algebras over algebraically closed fields, we obtain the following main theorem.

\medskip
\noindent {\bf Main Theorem.} \textit{Let $A$ be a finite-dimensional $k$-algebra and $K/k$ be a finite separable field extension such that $K$ is algebraically closed. Then $A$ is $\C$-dichotomic.}

\medskip
Since the $\C$-dichotomy implies the dichotomy properties of representation type on the levels of homotopy category and derived category (see Proposition \ref{dich-A-kb}),  we have

\medskip

\noindent {\bf The second derived Brauer-Thrall type theorem.} \textit{If $A$ is a finite-dimensional $k$-algebra and $K/k$ is a finite separable extension such that $K$ is algebraically closed, then $A$ is either derived-discrete or strongly derived-unbounded.}

\medskip

As an example, if $k$ is the real number field $\mathbb{R}$, then the main theorem holds. In particular, the second Brauer-Thrall type theorem is true for any finite-dimensional $\mathbb{R}$-algebra.

\section{The derived representation type of algebras}

In this section, we recall the definitions related to derived representation
types, and then introduce C-dichotomic algebras.

\subsection{Definitions Related to Derived Representation Type}

Let $k$ be an infinite field and $A$ a finite-dimensional algebra over $k$. Denote by $A\mo$ the category of all
finite-dimensional left $A$-modules and $A\proj$ its full
subcategory consisting of all finitely generated projective left
$A$-modules. Denote by $\cm^b(A\mo)$ the category of all bounded complexes of $A\mo$, and by $\cm^\mathrm{b}(A\proj)$ its full subcategory consisting of all bounded
complexes of $A\proj$. Denote by $\kb(A\proj)$ the
homotopy category of $\cm^\mathrm{b}(A\proj)$, and by $\db(A\mo)$ the bounded derived category of $A\mo$.

Recall that a complex $X=(X^i, d^i) \in \cm^\mathrm{b}(A\proj)$ is said to
be {\it homotopy minimal} if $\mathrm{Im} d^i \subseteq \mathrm{rad} X^{i+1}$ for all $i \in \mathbb{Z}$. For any integer $m\geq 0$, denote by
$\cm_m(A\proj)$ the full subcategory of $\cm^\mathrm{b}(A\proj)$ consisting of all homotopy minimal complexes $X=(X^i, d^i)$ such that $X^i=0$ for any
$i\notin\{0, 1, \cdots, m\}$.

We start this section by recalling from \cite{ZH16} definitions related to a finite derived representation type.

\begin{defn}
	Let $A$ be a $k$-algebra.\\
	(1) The category $\cm_m(A\proj)$ is defined to be {\it of finite representation type} if up to isomorphisms, there are only finitely many indecomposable objects in $\cm_m(A\proj)$.\\
	(2) The category $\kb(A\proj)$ is defined to be {\it discrete} if for each cohomology dimension vector, there admit only finitely many objects in $\kb(A\proj)$ up to isomorphisms. \\
	(3) The algebra $A$ is defined to be {\it derived-discrete} if for each cohomology dimension vector, there admit only finitely many objects in $\db(A\mo)$ up to isomorphisms.
\end{defn}

The next lemma shows the connections of the above definitions.

\begin{lem} \label{lem-discrete}
	Let $A$ be a $k$-algebra. Consider the following statements:\\
    {\rm (1)} for each $m>0$, $\cm_m(A\proj)$ is of finite representation type,\\
    {\rm (2)} the category $\kb(A\proj)$ is discrete,\\
    {\rm (3)} $A$ is derived-discrete. \\
    Then we have {\rm (1)} $\Longrightarrow$ {\rm (2)} $\Longleftrightarrow$ {\rm (3)}.
\end{lem}
\begin{proof}
	For the first implication. By \cite[Lemma 2.5]{Li19}, we only need to prove that, for each dimension vector $(n_i)_{i\in\mathbb{Z}}\in\mathbb{N}^{(\mathbb{Z})}$, the set  $$\{X=(X^i,d^i)\in\kb(A\proj)\;|\;\dim_kX_i=n_i,\forall i\in\mathbb{Z}\}$$ has finitely many isomorphism classes. After some shifts, there is an integer $m>0$ such that $n_i=0$ for $i<0$ and $i>m$. By assumption, $\cm_m(A\proj)$ has finitely many isomorphism classes. Each object $X$ in the above set is homotopy equivalent to a homotopy minimal complex, denoted by $\bar{X}$, in $\cm_m(A\proj)$. Since $\bar{X}$ is isomorphic to $\bar{Y}$ in $\cm_m(A\proj)$ implies that $X$ is isomorphic to $Y$ in $\kb(A\proj)$, the above set has finitely many isomorphism classes.
	
	The equivalence of the second statement and the third one follows by applying \cite[Lemma 2.5]{Li19}.
\end{proof}

Following \cite{ZH16}, given a complex $X$ in $\cm^b(A\mo)$, the {\it cohomological range} of $X$
is defined as $$\mathrm{hr}_k(X) := \mathrm{hl}_k(X) \cdot \mathrm{hw}(X),$$ where
$$\mathrm{hl}_k(X) := \max\{\dim_k H^i(X) \; | \; i \in \mathbb{Z}\}$$
and $$\mathrm{hw}(X) := \max\{j-i+1 \; | \; H^i(X) \neq 0 \neq H^j(X)\}.$$

Now we recall from \cite{ZH16} the definitions related to an infinite derived representation type. Note that the definition (1) has an equivalent form using the dimension of complex; see \cite[Lemma 1.6]{Zh16a}.

\begin{defn}
	Let $A$ be a $k$-algebra.\\
	(1) We say $\cm_m(A\proj)$ is {\it of strongly unbounded type} if there is an increasing sequence $(r_i)_{i\in\mathbb{N}}\in\mathbb{N}^ \mathbb{N}$ such that for each $r_i$, up to isomorphisms, there are infinitely many indecomposable objects in $\cm_m(A\proj)$ with cohomological range $r_i$.\\
	(2) We say $\kb(A\proj)$ is {\it of strongly unbounded type}  if there is an increasing sequence $(r_i)_{i\in\mathbb{N}}\in\mathbb{N}^ \mathbb{N}$ such that for each $r_i$, up to shifts and isomorphisms, there are infinitely many indecomposable objects in $\kb(A\proj)$ with cohomological range $r_i$.\\
	(3) We say $A$ is {\it strongly derived-unbounded} if there is an increasing sequence $(r_i)_{i\in\mathbb{N}}\in\mathbb{N}^ \mathbb{N}$ such that for each $r_i$, up to shifts and isomorphisms, there are infinitely many indecomposable objects in $\db(A\mo)$ with cohomological range $r_i$.
\end{defn}

\begin{exm}
{\rm (1)} The representation-infinite algebras over infinite perfect fields are strongly derived-unbounded, since the truth of classical Brauer-Thrall conjecture~II for module category \cite{NR75,Rin80} and the embedding from the module category to derived category.

{\rm (2)} Let $k$ be an algebraically closed field, and $A$ be a gentle algebra with one cycle with clock condition or more than one cycle, then $A$ is strongly unbounded. Here, the clock condition means that the number of clockwise relations on the cycle equals to that of counterclockwise ones. Indeed, $A$ has generalized bands in these cases, see \cite{Rin97} for example, and then one can construct an increasing sequence $(r_i)_{i\in\mathbb{N}}\in\mathbb{N}^ \mathbb{N}$ and infinite many non-isomorphic indecomposables in derived category for each $r_i$ by the combinatorial description in \cite{BM03}.

{\rm (3)} The algebras which are derived equivalent to strongly derived-unbounded algebras,  are also strongly derived-unbounded, since the derived equivalences can be realized as tensor functors by two-sided tilting complexes, under which cohomological ranges can be controlled, see \cite[Prop. 4]{ZH16}.
\end{exm}

The next lemma shows the connections of the above definitions. They were essentially proved in \cite{Bau07,Zh16a}, where $k$ was supposed to be algebraically closed. Here we include a proof for an arbitrary infinite field $k$.
The following notion is needed in the proof.

 Let $\mathbf{K}^{-,b}(A\proj)$ be the homotopy category consisting of bounded-above complexes with bounded cohomologies. There is a well-known triangle equivalence $$p\colon \db(A\mo)\longrightarrow\mathbf{K}^{-,b}(A\proj),$$ sending $X$ to its projective resolution $pX$; see \cite{Wei95}. We can further assume that $pX$ is homotopy minimal. For each $P$ in $\mathbf{K}^-(A{\mbox{-\rm proj}})$, let $P_{\geq t}\in\mathbf{K}^b(A{\mbox{-\rm proj}})$ be the brutal truncation of $P$ at degree $t$.

\begin{lem}\label{impc-rel}
	Let $A$ be a $k$-algebra. Consider the following statements:\\
   {\rm (1)} there is an $m\geq 1$ such that $\cm_m(A\proj)$ is of strongly unbounded type, \\
   {\rm (2)} the category $\kb(A\proj)$ is of strongly unbounded type,\\
   {\rm (3)} the algebra $A$ is strongly derived-unbounded.\\
   Then we have {\rm (1)} $\Longrightarrow$ {\rm (2)} $\Longleftrightarrow$ {\rm (3)}.
\end{lem}
\begin{proof}
	For the first implication. By (1), there is an increasing sequence $(r_i)_{i\in\mathbb{N}}\in\mathbb{N}^ \mathbb{N}$ such that for each $r_i$, up to isomorphisms, there are infinitely many indecomposable objects in $\cm_m(A\proj)$ with cohomological range $r_i$ for some $m\geq 1$. Given two complexes in $\cm_m(A\proj)$, the property of homotopy minimality implies that they are isomorphic in $\kb(A\proj)$ if and only if they are isomorphic in $\cm_m(A\proj)$. In addition, a complex in $\cm_m(A\proj)$ is indecomposable if and only if it is indecomposable as a complex in $\kb(A\proj)$. Hence for each $r_i$, there are infinitely many indecomposable objects in $\kb(A\proj)$ with cohomological range $r_i$.
	

	Now we prove the equivalence of the second statement and the third one. The ``$\Longrightarrow$'' part holds because $\kb(A\proj)$ is a full subcategory of $\db(A\mo)$ by embedding into $\mathbf{K}^{-,b}(A\proj)$.
	
	For the ``$\Longleftarrow$'' part, by assumption, there is an increasing sequence $(r_i)_{i\in\mathbb{N}}\in\mathbb{N}^ \mathbb{N}$ such that for each $r_i$, up to shifts and isomorphisms, there are infinitely many indecomposable objects in $\db(A\mo)$ with cohomological range $r_i$. Let $\mathcal{X}_i$ be the set of complexes in $\db(A\mo)$ with cohomological range $r_i$ whose nonzero cohomologies concentrate between degree $1$ and $r_i$. Then up to shifts and isomorphisms, $\mathcal{X}_i$ has infinitely many objects.
	
	For each $i>0$ and each $X$ in $\mathcal{X}_i$, denote by $(pX)$ its homotopy minimal projective resolution and $(pX)_{\geq 0}$ the brutal truncation at degree $0$. Then $(pX)_{\geq 0}$ is an indecomposable object in $\cm_{r_i}(A\proj)$ with $\mathrm{hr}((pX)_{\geq 0})\geq r_i$. By \cite[Lemma 2.4]{Li19}, the set $$\{\mathrm{hr}((pX)_{\geq 0})\;|\;X\in\mathcal{X}_i\}$$ also has an upper bound. By \cite[Lemma 2.3]{Li19}, there is a positive number $s_i$ between $r_i$ and the upper bound above such that the set $$\{(pX)_{\geq 0}\in\kb(A\proj)\;|\;X\in\mathcal{X}_i\}$$ has infinitely many objects up to isomorphisms.
	
	Since $(r_i)_{i\in\mathbb{N}}$ is an increasing sequence and $s_i\geq r_i$ for each $i$, inductively we can pick an increasing subsequence $(s'_i)_{i\in\mathbb{N}}\in\mathbb{N}^ \mathbb{N}$ of $(s_i)_{i\in\mathbb{N}}$. So $(s'_i)_{i\in\mathbb{N}}\in\mathbb{N}^ \mathbb{N}$ is an increasing sequence such that, up to shifts and isomorphisms, there are infinitely many indecomposable objects in $\kb(A\proj)$ with cohomological range $s'_i$. This completes the proof.
	
    We should notice that the proof here does not imply the strongly unboundedness of $\cm_{m}(A\proj)$ for some $m$, since different set $\mathcal{X}_i$ belong to different $\cm_{m}(A\proj)$ and we can not find a uniformed integer $m$, such that for each $t_i$, up to isomorphisms, there are infinitely many indecomposable objects in $\cm_m(A\proj)$ with cohomological range $t_i$ for some increasing sequence $(t_i)_{i\in\mathbb{N}}\in\mathbb{N}^ \mathbb{N}$.
\end{proof}

\subsection{$\C$-dichotomic Algebras}

In this subsection, we introduce the definition of $\C$-dichotomic algebras and its relation with the second derived Brauer-Thrall type theorem.

By Lemma \ref{lem-discrete} and Lemma \ref{impc-rel}, we have the following proposition.

\begin{prop}\label{equ KandD} For a finite-dimensional $k$-algebra $A$, the following statements are equivalent:\\
	{\rm (1)} the category $\kb(A\proj)$ is either discrete or of strongly unbounded type,\\
	{\rm (2)} $A$ is either derived-discrete or strongly derived-unbounded.
\end{prop}

As in \cite{ZH16}, if one of the above statements holds, we say that {\it the second derived Brauer-Thrall type theorem} holds for $A$. The following proposition due to \cite{ZH16} shows that, for any finite-dimensional algebra over an algebraically closed field, the second derived Brauer-Thrall type theorem holds.

\begin{prop}
	Let $A$ be a finite-dimensional algebra over an algebraically closed field. Then $A$ is either derived-discrete or strongly derived-unbounded.
\end{prop}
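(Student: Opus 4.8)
The plan is to transfer the dichotomy from the level of the complex categories $\cm_m(A\proj)$, where it is most accessible, up to the derived category, using the logical relationships already assembled in Lemmas \ref{lem-discrete} and \ref{impc-rel}. Since $A$ is defined over an algebraically closed field, I would first invoke the complex-level dichotomy theorem \cite[Corollary 2.9]{Z}, which asserts that $A$ is $\C$-dichotomic; that is, exactly one of two alternatives holds: either $\cm_m(A\proj)$ is of finite representation type for every $m\geq 1$, or $\cm_{m'}(A\proj)$ is of strongly unbounded type for some $m'\geq 1$.

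In the first alternative the hypothesis of the first implication in Lemma \ref{lem-discrete} is satisfied for all $m>0$, so $\kb(A\proj)$ is discrete, and the equivalence recorded in that lemma gives that $A$ is derived-discrete. In the second alternative the hypothesis of the first implication in Lemma \ref{impc-rel} is met, so $\kb(A\proj)$ is of strongly unbounded type, and the equivalence in that lemma upgrades this to $A$ being strongly derived unbounded. As the two alternatives are exhaustive, $A$ is either derived-discrete or strongly derived unbounded, which is the assertion.

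The substance of the argument is thus concentrated entirely in the complex-level dichotomy \cite[Corollary 2.9]{Z} (equivalently, the original Brauer-Thrall type analysis of \cite{ZH}); the two lemmas serve only as the bridge. I expect the genuine obstacle to lie in this complex-level statement, which I would not reprove here but which rests on Vossieck's classification of derived-discrete algebras over algebraically closed fields \cite{V} together with the covering and numerical-invariant machinery of \cite{ZH, Z}. The delicate point inside that machinery is the passage from mere infinite type --- infinitely many indecomposables sharing a single cohomology dimension vector --- to strong unboundedness, namely infinitely many indecomposables for each term of an unbounded increasing sequence of cohomological ranges; this is the precise derived analogue of the second Brauer-Thrall theorem for modules, where algebraic closedness (or perfectness) of the field is essential.
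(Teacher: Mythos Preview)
Your argument is correct, but the paper does not actually prove this proposition: it merely records it as a result due to \cite{ZH} and moves on. What you have written is, in effect, the content of the later Proposition~\ref{dich-A-kb} combined with the remark (following Corollary~\ref{equ}) that over an algebraically closed field $A$ is $\C$-dichotomic by \cite[Corollary 2.9]{Z}. So you have anticipated the paper's subsequent machinery and specialized it back to the algebraically closed case; the paper's intent at this point in the exposition was only to quote the known result before introducing the $\C$-dichotomic framework. One minor anachronism: you invoke the notion of $\C$-dichotomic, which in the paper is defined only \emph{after} this proposition, so as written your proof could not be slotted in at the proposition's actual location without forward reference.
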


\begin{defn}
	A finite-dimensional $k$-algebra $A$ is defined to be {\it $\C$-dichotomic} if either $\cm_m(A\proj)$ is of finite representation type for each $m\geq 1$, or $\cm_{m'}(A\proj)$ is of strongly unbounded type for some $m'\geq 1$.
\end{defn}

\begin{rem}
	The category $\cm_m(A\proj)$ is of strongly unbounded type for some integer $m=M\geq 1$, which is equivalent to that $\cm_m(A\proj)$ is of strongly unbounded type for all $m\geq M$ since $\cm_m(A\proj)\subseteq\cm_{m+1}(A\proj)$. Thus a $k$-algebra $A$ is $\C$-dichotomic if either $\cm_m(A\proj)$ is of finite representation type for any $m\geq 1$, or $\cm_{m}(A\proj)$ is of strongly unbounded type for almost all positive integer $m$.
\end{rem}

\begin{prop}\label{dich-A-kb}
	Assume that $A$ is a $\C$-dichotomic finite-dimensional $k$-algebra.\\
	{\rm (1)} The category $\kb(A\proj)$ is either discrete or of strongly unbounded type.\\
	{\rm (2)} The second derived Brauer-Thrall type theorem holds for $A$.
\end{prop}
\begin{proof}
    (1). By Lemma \ref{lem-discrete}, if $\kb(A\proj)$ is not discrete, then it is not true that $\cm_m(A\proj)$ is of finite representation type for each $m\geq 1$. So $\cm_{m'}(A\proj)$ is of strongly unbounded type for some $m'\geq 1$ by assumption. By Lemma \ref{impc-rel}, $\kb(A\proj)$ is of strongly unbounded type. 
    
    (2) follows from (1) and Proposition \ref{equ KandD}
\end{proof}

Note that our definitions in this section also make sense for algebras over finite fields. In this case, we give an example which shows that the converse of the proposition may be not true.

\begin{exm}
	Let $k$ be a finite field. For each finite-dimensional $k$-algebra $A$, there are finitely many morphisms between projective $A$-modules. Then by \cite[Lemma~2.5]{Li19}{\rm (}whose proof doesn't depend on the cardinal of $k${\rm )}, $\kb(A\proj)$ is always discrete.
	
	If $A$ is a hereditary algebra over $k$ which is not of finite representation type, then $\cm_1(A\proj)$ is not of finite representation type. However, for any $m\geq 1$ and any $r_i>0$, there are finitely many objects $(X^j,d^j)$ in $\cm_m(A\proj)$ with $\sum_{j=0}^{m}X^j=r_i$. By \cite[Lemma 1.6]{Zh16a}{\rm (}whose proof doesn't depend on the cardinal of $k${\rm )}, $\cm_m(A\proj)$ is not of strongly unbounded type for any $m\geq 1$. Hence $A$ is not $\C$-dichotomic.
\end{exm}

The $\C$-dichotomy implies not only the second derived Brauer-Thrall type theorem, but also the equivalence of discretenesses and strongly unbounded properties on three levels, as in the following corollary.

\begin{cor}\label{equ}
	Assume that $A$ is a $\C$-dichotomic finite-dimensional $k$-algebra. \\
	{\rm (a)} The following three conditions are equivalent:
	\begin{enumerate}[itemindent=1em]
		\item[\rm (a1)] $A$ is derived-discrete,
		\item[{\rm (a2)}] the category $\kb(A\proj)$ is discrete,
		\item[{\rm (a3)}] the category $\cm_m(A\proj)$ is of finite representation type for each $m\geq 1$.
	\end{enumerate}
    {\rm (b)} The following three conditions are equivalent:
    \begin{enumerate}[itemindent=1em]
    	\item [{\rm (b1)}] $A$ is strongly derived-unbounded,
    	\item [{\rm (b2)}] the category $\kb(A\proj)$ is of strongly unbounded type,
    	\item [{\rm (b3)}] the category $\cm_m(A\proj)$ is of strongly unbounded type for some $m\geq 1$.
    \end{enumerate}
\end{cor}
\begin{proof} We only prove (a), and (b) can be proved in the similar way.
	By Lemma~\ref{impc-rel}, to prove that the equivalence of three statements, it suffices to prove that the discreteness of $\db(A\mo)$ implies the finiteness of $\cm_m(A\proj)$ for any $m>1$. If not, then $\cm_{M}(A\proj)$ is of strongly unbounded type for some $M>1$ since $A$ is a $\C$-dichotomic $k$-algebra. Therefore $A$ is strongly derived-unbounded by Lemma~\ref{impc-rel}. This is ridiculous since any algebra can not be derived-discrete and strongly derived-unbounded by definition.	
\end{proof}

\begin{rem}
	Assume that $A$ is a finite-dimensional $k$-algebra.\\
	{\rm (1)} If the field $k$ is algebraically closed, then $A$ is $\C$-dichotomic; see \cite[Corollary 2.9]{Zh16a}.\\
	{\rm (2)} We don't know whether or not $A$ is $\C$-dichotomic if the field $k$ is not algebraically closed.
\end{rem}

\medskip

\section{Base field extensions}

In this section, we mainly explore the $\C$-dichotomy of a $k$-algebra with $k$ admitting a finite separable extension $K/k$ such that $K$ is an algebraically closed field.

Let $K/k$ be a finite separable field extension. It is well known that the algebra extension $A\rightarrow A\otimes_kK$ induces an adjoint pair $(-\otimes_kK,F)$ between $A\mo$ and $A\otimes_kK\mo$, where $$F\colon A\otimes_kK\mo \longrightarrow A\mo$$ is the restriction functor. These two functors are both exact, mapping projective modules to projective modules and radicals to radicals. So they extend in a natural manner to adjoint pairs between $\cm_m(A\proj)$ and $\cm_m(A\otimes_kK\proj)$. We still denote them by $(-\otimes_kK, F)$ for convenience.

Since both $-\otimes_kK$ and $F$ are separable functors, each complex $X$ is a direct summand of $F(X\otimes_kK)$ in $\cm_m(A\proj)$ and each complex $Y$ is a direct summand of $F(Y)\otimes_kK$ in $\cm_m(A\otimes_kK\proj)$; see \cite{Li19}. So we have the following lemmas.

\begin{lem}\label{relative}
	{\rm (1)} For each indecomposable object $X$ in $\cm_m(A\proj)$, there is an indecomposable direct summand $Y$ of $X\otimes_kK$ in $\cm_m(A\otimes_kK\proj)$ such that $X$ is a direct summand of $F(Y)$. \\
	{\rm (2)} For each indecomposable object $Y$ in $\cm_m(A\otimes_kK\proj)$, there is an indecomposable direct summand $X$ of $F(Y)$ in $\cm_m(A\proj)$ such that $Y$ is a direct summand of $X\otimes_kK$.
\end{lem}

\begin{lem}\label{fin-lim}
	Let $K/k$ be a field extension of degree $l$, and $A$ be a $k$-algebra. \\
	{\rm (1)} For each indecomposable object $X$ in $\cm_m(A\proj)$, $X\otimes_kK$ has at most $l$ indecomposable direct summands up to isomorphisms.\\
	{\rm (2)}  For each indecomposable object $Y$ in $\cm_m(A\otimes_kK\proj)$, $F(Y)$ has at most $l$ indecomposable direct summands up to isomorphisms.
\end{lem}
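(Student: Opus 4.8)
The plan is to treat the two statements as dual to one another and to reduce each to a count of indecomposable summands in a Krull--Schmidt category. First I would record the two ingredients that make such a count legitimate. Because $\cm_m(A\proj)$ is a full subcategory of $\kb(A\proj)$ with finite-dimensional morphism spaces (as noted in the proof of Lemma~\ref{impc-rel}), every object has a finite-dimensional, hence semiperfect, endomorphism algebra; so $\cm_m(A\proj)$ and $\cm_m(A\otimes_kK\proj)$ are Krull--Schmidt, and ``multiplicity of an indecomposable summand'' is well defined and additive over direct sums. The second ingredient is the identity $F(Z\otimes_kK)\cong Z^{\oplus l}$, valid for every $Z$ in $\cm_m(A\proj)$: it comes from $K\cong k^{\oplus l}$ as $k$-modules, holds degreewise, and is compatible with the differentials, so no clash with homotopy-minimality arises.

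For part (1) I would argue directly. Write $X\otimes_kK\cong\bigoplus_{j}Y_j^{\oplus a_j}$ with the $Y_j$ pairwise non-isomorphic indecomposables of $\cm_m(A\otimes_kK\proj)$. Applying $F$ and using the identity gives $X^{\oplus l}\cong F(X\otimes_kK)\cong\bigoplus_j F(Y_j)^{\oplus a_j}$. Each $F(Y_j)$ is nonzero, since $F$ is faithful, hence contributes at least one indecomposable summand. Comparing summands with multiplicity across this isomorphism in a Krull--Schmidt category, the right-hand side has $\sum_j a_j n_j$ indecomposable summands with each $n_j\geq1$, while the left-hand side has exactly $l$; therefore $\sum_j a_j\leq l$, and $X\otimes_kK$ has at most $l$ indecomposable summands even counted with multiplicity. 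In particular Fact~\ref{relative}(1) is not needed here.

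Part (2) is where I expect the real work, because the composite $(-\otimes_kK)\circ F$ is \emph{not} simply $(-)^{\oplus l}$, so the argument of part (1) does not dualize verbatim. My plan is to compute the induced complex $F(Y)\otimes_kK$, which as an $A\otimes_kK$-complex is isomorphic to $Y\otimes_K(K\otimes_kK)$; to bound the multiplicity count of $F(Y)$ by that of $F(Y)\otimes_kK$ exactly as above (writing $F(Y)\cong\bigoplus_j X_j^{\oplus b_j}$, each $X_j\otimes_kK$ is nonzero, so $\sum_j b_j$ is at most the number of summands of $F(Y)\otimes_kK$); and then to bound the indecomposable summands of $Y\otimes_K(K\otimes_kK)$ by $l$. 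When $K/k$ is Galois this last step is clean: $K\otimes_kK\cong\prod_{\sigma}K$ splits completely, so $Y\otimes_K(K\otimes_kK)$ is a direct sum of exactly $l$ Galois conjugates of $Y$, and the count of part (1) applies to give $\sum_j b_j\leq l$.

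The main obstacle is the non-Galois case, where $K\otimes_kK\cong\prod_i L_i$ with proper separable extensions $L_i/K$: the summands of $Y\otimes_K(K\otimes_kK)=\bigoplus_i(Y\otimes_KL_i)$ are then ``twisted'' restrictions rather than literal copies of $Y$, and to keep the total at $\sum_i[L_i:K]=l$ one must show that each $Y\otimes_KL_i$ contributes at most $[L_i:K]$ indecomposable $A\otimes_kK$-summands. I would try to settle this by passing to a Galois closure of $K/k$ so as to reduce to the split case together with part (1), or equivalently by a Clifford-type orbit count for the $K$-action on $\mathrm{End}_A(F(Y))$; here separability of $K/k$ is what guarantees that $K\otimes_kK$ is \'etale and, through Fact~\ref{relative}, that $F$ and $-\otimes_kK$ detect direct summands. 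Making this non-split descent yield the sharp bound $l$, rather than a weaker multiple of it, is the step most likely to require genuine care.
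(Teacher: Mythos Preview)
Your argument for (1) is essentially the paper's: the paper too records $F(X\otimes_kK)\cong X^{\oplus l}$ and concludes by additivity of $F$.

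For (2) you have overlooked a two-line proof. The paper simply applies Fact~\ref{relative}(2): choose an indecomposable summand $X$ of $F(Y)$ with $Y\mid X\otimes_kK$; then applying $F$ gives $F(Y)\mid F(X\otimes_kK)\cong X^{\oplus l}$, so $F(Y)$ has at most $l$ indecomposable summands (indeed all isomorphic to $X$). Separability of $K/k$ enters only through Fact~\ref{relative}(2), which you mention but never exploit in this direction; no analysis of $K\otimes_kK$, no Galois versus non-Galois case distinction, and no Clifford theory is needed.

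Your detour through $F(Y)\otimes_kK\cong Y\otimes_K(K\otimes_kK)$ is correct when $K/k$ is Galois, but---as you yourself flag---the non-split case is left as an outline, and obtaining the sharp bound $l$ rather than a crude multiple of it along that route is exactly the delicate point you have not resolved. The paper's trick bypasses the issue entirely by realising $F(Y)$ as a summand of $X^{\oplus l}$ on the $k$-side.
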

\begin{proof}
	(1) Let $\{\alpha_1,\dots,\alpha_l\}$ be a $k$-basis of $K$. We have an isomorphism in $\cm_m(A\proj)$:
	\begin{align*}
	F(X\otimes_kK)&\simeq X^{\oplus l}\\
	x\otimes\lambda&\mapsto (\lambda_ix)_{i=1}^l,
	\end{align*}
	where $X^{\oplus l}$ is the direct sum of $l$ copies of $X$ and $\lambda_i$ are elements in $k$ such that $\lambda=\sum_{i=1}^{l}\lambda_i\alpha_i$. So our statement holds since $F$ is an additive functor.
	
	(2) For each indecomposable object $Y$ in $\cm_m(A\otimes_kK\proj)$, there is an indecomposable direct summand $X$ of $F(Y)$ in $\cm_m(A\proj)$ such that $Y$ is a direct summand of $X\otimes_kK$ (Lemma~\ref{relative}). Hence $F(Y)$ is a direct summand of $F(X\otimes_kK)$ in $\cm_m(A\proj)$. The isomorphism $F(X\otimes_kK)\simeq X^{\oplus l}$ then implies that $F(Y)$ has at most $l$ indecomposable direct summands.
\end{proof}

\begin{prop}\label{dich-ext}
	Let $A$ be a $k$-algebra and $K/k$ be a finite separable field extension. Then following statements hold.\\
	{\rm (1)} For each $m>0$, $\cm_m(A\proj)$ is of finite representation type if and only if so is $\cm_m(A\otimes_kK\proj)$.\\
	{\rm (2)} For each $m>0$, $\cm_m(A\proj)$ is of strongly unbounded type if and only if so is $\cm_m(A\otimes_kK\proj)$.

	As a consequence, $A$ is $\C$-dichotomic if and only if so is $A\otimes_kK$.
\end{prop}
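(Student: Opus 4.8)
The plan is to exploit the separable adjoint pair $(-\otimes_kK, F)$ together with the quantitative control supplied by Fact \ref{relative} and Lemma \ref{fin-lim}. The crucial preliminary observation is how the two functors act on cohomology. Since $-\otimes_kK$ and $F$ are both exact, one has $H^i(X\otimes_kK)\simeq H^i(X)\otimes_kK$ and $H^i(F(Y))\simeq F(H^i(Y))$ for all $i$. Counting dimensions over the respective ground fields and writing $l=[K:k]$, this gives $\dim_K H^i(X\otimes_kK)=\dim_k H^i(X)$ and $\dim_k H^i(F(Y))=l\cdot\dim_K H^i(Y)$. Because the set of degrees carrying nonzero cohomology is unchanged by either functor, the cohomological width is preserved, so $\mathrm{hr}_K(X\otimes_kK)=\mathrm{hr}_k(X)$ and $\mathrm{hr}_k(F(Y))=l\cdot\mathrm{hr}_K(Y)$. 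I will also use that the cohomological range is monotone under direct summands: if $U$ is a summand of $V$ then $H^i(U)$ is a summand of $H^i(V)$, whence $\mathrm{hr}(U)\leq\mathrm{hr}(V)$.

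For part (1), suppose $\cm_m(A\proj)$ is of finite representation type, with indecomposables $X_1,\dots,X_n$. By Fact \ref{relative}(2), every indecomposable $Y$ in $\cm_m(A\otimes_kK\proj)$ is a direct summand of some $X_j\otimes_kK$; by Lemma \ref{fin-lim}(1) each $X_j\otimes_kK$ has only finitely many indecomposable summands, so there are finitely many such $Y$ up to isomorphism. The reverse implication is symmetric, using Fact \ref{relative}(1) and Lemma \ref{fin-lim}(2) to show that every indecomposable $X$ occurs among the finitely many summands of the complexes $F(Y_i)$.

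Part (2) is where the real work lies. Assume $\cm_m(A\proj)$ is of strongly unbounded type, witnessed by an increasing sequence $(r_i)$. Fix $i$ and an infinite family of pairwise non-isomorphic indecomposables $X$ of range $r_i$. For each such $X$, Fact \ref{relative}(1) produces an indecomposable summand $Y$ of $X\otimes_kK$ with $X$ a summand of $F(Y)$. Combining the two summand relations with the range identities above yields $r_i/l\leq \mathrm{hr}_K(Y)\leq r_i$, so all the resulting $Y$ have range in a fixed finite set of integers. Moreover the assignment $X\mapsto Y$ is at most $l$-to-one, since the $X$ mapping to a given $Y$ are among the at most $l$ indecomposable summands of $F(Y)$ (Lemma \ref{fin-lim}(2)). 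Hence infinitely many $X$ force infinitely many $Y$, and by the pigeonhole principle infinitely many of them share a common range $s_i\in[r_i/l,\,r_i]$. Since $s_i\geq r_i/l\to\infty$, I can pass to a subsequence making $(s_i)$ strictly increasing, which exhibits strong unboundedness of $\cm_m(A\otimes_kK\proj)$. The converse runs identically with the roles of the functors exchanged, now using $r_i\leq\mathrm{hr}_k(X)\leq l\cdot r_i$ and Lemma \ref{fin-lim}(1) for the finite-to-one bound. The main obstacle throughout is precisely keeping the cohomological range confined to a bounded window under the two functors, so that the infinitude of indecomposables survives the pigeonhole step and a legitimate increasing witnessing sequence can still be extracted.

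Finally, the consequence is immediate. By definition $A$ is $\C$-dichotomic if and only if either $\cm_m(A\proj)$ is of finite representation type for every $m\geq 1$, or $\cm_{m'}(A\proj)$ is of strongly unbounded type for some $m'\geq 1$. Parts (1) and (2) transfer each of these two alternatives verbatim between $A$ and $A\otimes_kK$, so the two algebras are $\C$-dichotomic simultaneously.
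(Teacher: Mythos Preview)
Your proof is correct and follows essentially the same approach as the paper: both arguments exploit the separable adjoint pair $(-\otimes_kK,F)$, Fact~\ref{relative}, and Lemma~\ref{fin-lim} to transfer (in)finiteness of indecomposables, and both confine the cohomological range to the windows $[r_i/l,\,r_i]$ and $[r_i,\,l\cdot r_i]$ before applying a pigeonhole argument to extract an increasing witnessing sequence. Your write-up is in fact slightly more explicit than the paper's in two places: you spell out the identities $\mathrm{hr}_K(X\otimes_kK)=\mathrm{hr}_k(X)$ and $\mathrm{hr}_k(F(Y))=l\cdot\mathrm{hr}_K(Y)$ at the outset, and you justify why infinitely many $X$ yield infinitely many $Y$ via the at-most-$l$-to-one bound from Lemma~\ref{fin-lim}, whereas the paper asserts this without comment.
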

\begin{proof}
	Let $l$ be the degree of the extension $K/k$.
	
    (1) The ``if'' part. For each $m\geq 1$, up to isomorphisms, let $Y_1,\dots,Y_n$ be all the indecomposable objects in $\cm_m(A\otimes_kK\proj)$. For each indecomposable $X$ in $\cm_m(A\proj)$, $X$ is a direct summand of $F(X\otimes_kK)$. By Lemma 3.1, there is an indecomposable object, say $Y_i$ for some $i\in\{1,\dots,n\}$, which is a direct summand of $X\otimes_kK$ such that $X$ is a direct summand of $F(Y_i)$. Since up to isomorphisms, there are only finitely many indecomposable direct summand of $\oplus_{i=1}^{n}F(Y_i)$, $\cm_m(A\proj)$ is of finite representation type.

    The ``only if'' part can be proved similarly.

    (2) For the ``if'' part, let $(r_i)_{i\in\mathbb{N}}\in\mathbb{N}^ \mathbb{N}$ be an increasing sequence such that for each $r_i$, the set $$\mathcal{X}_i=\{X\in\cm_m(A\proj)\;|\;X \mbox{ is indecomposable with } \mathrm{hr}_k(X)=r_i\}$$ has infinitely many objects up to isomorphisms (i.e. $\mathcal{X}_i$ has infinitely many isomorphism classes in $\cm_m(A\proj)$). Denote by $\mathcal{Y}_i$ all the indecomposable objects $Y$ in $\cm_m(A\otimes_kK\proj)$ such that $Y$ is a direct summand of $X\otimes_kK$ and $F(Y)$ contains $X$ as a direct summand for some $X$ in $\mathcal{X}_i$. Because $\mathcal{X}_i$ contains infinitely many objects up to isomorphisms, so does $\mathcal{Y}_i$.

    By the exactness of $-\otimes_kK$, we have $\mathrm{hr}_K(X\otimes_kK)=r_i$. By Lemma~\ref{fin-lim}~(1) $$\{\mathrm{hr}_K(Y)\;|\;Y\in \mathcal{Y}_i\}\subseteq[r_i/l,r_i].$$ So there is an integer $s_i$ between $r_i/l$ and $r_i$ such that, up to isomorphisms, there are infinitely many indecomposable objects in $\cm_m(A\otimes_kK\proj)$ with cohomological range $s_i$.

    Since $(r_i)_{i\in\mathbb{N}}$ is an increasing sequence, for each $i$ in $\mathbb{N}$ there is a larger $j$ in $\mathbb{N}$ such that $r_j>r_i/l$. Because $r_i\geq s_i\geq r_i/l$, we can pick inductively an increasing subsequence $(s'_i)_{i\in\mathbb{N}}\in\mathbb{N}^ \mathbb{N}$ of $(s_i)_{i\in\mathbb{N}}$ such that, up to isomorphisms, there are infinitely many indecomposable objects in $\cm_m(A\otimes_kK\proj)$ with cohomological range $s'_i$. Therefore $\cm_m(A\otimes_kK\proj)$ is of strongly unbounded type.

    The ``only if'' part. Let $(r_i)_{i\in\mathbb{N}}\in\mathbb{N}^ \mathbb{N}$ be an increasing sequence such that for each $r_i$, the set $$\mathcal{Y}_i=\{Y\in\cm_m(A\otimes_kK\proj)\;|\;Y \mbox{ is indecomposable with } \mathrm{hr}_K(Y)=r_i\}$$ has infinitely many objects up to isomorphisms. We denote by $\mathcal{X}_i$ all the indecomposable objects $X$ in $\cm_m(A\proj)$ such that $X$ is a direct summand of $F(Y)$ and $X\otimes_kK$ contains $Y$ as a direct summand for some $Y$ in $\mathcal{Y}_i$. Then $\mathcal{X}_i$ has infinitely many objects up to isomorphisms.

    Since $F$ is exact, $\mathrm{hr}_k(F(Y))=l\cdot r_i$. Here, we should notice that the cohomological range is defined by the dimension over $k$. By Lemma~\ref{fin-lim} ~(2), $$\{\mathrm{hr}_k(X)\;|\;X\in \mathcal{X}_i\}\subseteq[r_i, \;l\cdot r_i].$$ Then there is an integer $s_i$ between $r_i$ and $l\cdot r_i$ such that, up to isomorphisms, there are infinitely many indecomposable objects in $\cm_m(A\proj)$ with cohomological range $s_i$. 
    
    Since $(r_i)_{i\in\mathbb{N}}$ is an increasing sequence, for each $i$ in $\mathbb{N}$ there is a larger $j$ in $\mathbb{N}$ such that $r_j>l\cdot r_i$. So we can find inductively an increasing subsequence $(s'_i)_{i\in\mathbb{N}}\in\mathbb{N}^ \mathbb{N}$ of $(s_i)_{i\in\mathbb{N}}$ such that, up to isomorphisms, there are infinitely many indecomposable objects in $\cm_m(A\proj)$ with cohomological range $s'_i$. Therefore $\cm_m(A\proj)$ is of strongly unbounded type.
\end{proof}

\begin{rem}
The adjoint pairs $(-\otimes_kK,F)$ exist on the levels of homotopy category and derived category. By the argument used in the proof of Proposition~\ref{dich-ext}, we can prove that the second Brauer-Thrall type theorem holds for $A$ if and only if it holds for $A\otimes_kK$.
\end{rem}

If $K$ is algebraically closed, then by \cite[Corollary 2.9]{Zh16a} and the above proposition, we have

\begin{thm}\label{main}
	Let $A$ be a $k$-algebra and $K/k$ be a finite separable extension such that $K$ is algebraically closed. Then $A$ is $\C$-dichotomic.
\end{thm}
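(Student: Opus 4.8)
The plan is to reduce the statement to the already-settled algebraically closed case by passing to the scalar extension $A\otimes_kK$ and then transporting $\C$-dichotomy back along the finite separable extension $K/k$. The whole argument splits into two invocations: one of the dichotomy theorem over algebraically closed fields, and one of the base-change compatibility established in Proposition \ref{dich-ext}. Since the substantive work has already been done in that proposition, the theorem should come out as an immediate corollary.

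First I would observe that $A\otimes_kK$ is a finite-dimensional algebra over $K$, and $K$ is algebraically closed by hypothesis. Hence \cite[Corollary 2.9]{Z} applies directly to $A\otimes_kK$ and yields that $A\otimes_kK$ is $\C$-dichotomic: either $\cm_m((A\otimes_kK)\proj)$ is of finite representation type for every $m\geq 1$, or $\cm_{m'}((A\otimes_kK)\proj)$ is of strongly unbounded type for some $m'\geq 1$.

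Second, I would invoke the consequence recorded at the end of Proposition \ref{dich-ext}, namely that $A$ is $\C$-dichotomic if and only if $A\otimes_kK$ is. Combining this equivalence with the previous step gives that $A$ itself is $\C$-dichotomic, which is exactly the assertion of the theorem.

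The real content is not in this final assembly but in Proposition \ref{dich-ext}, where the finiteness and strongly-unbounded properties are matched up across the extension, and the crucial hypothesis there is separability. Separability of $K/k$ is what makes $-\otimes_kK$ and $F$ into separable functors (cf. \cite{J}), so that every indecomposable on one side is a direct summand of the image of an indecomposable from the other (Fact \ref{relative}), while Lemma \ref{fin-lim} bounds the number of such summands by the degree $l=[K:k]$. These bounds are precisely what control the cohomological ranges (the shifts $s_i\in[r_i/l,\,r_i]$ and $s_i\in[r_i,\,l\cdot r_i]$) and keep the two representation types in lockstep. So the only place one expects difficulty is in ensuring these separable-functor estimates go through; that obstacle has already been resolved in the preceding proposition, leaving nothing further to verify for the theorem.
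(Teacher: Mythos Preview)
Your proposal is correct and matches the paper's own argument exactly: the theorem is stated as an immediate consequence of \cite[Corollary 2.9]{Z} applied to $A\otimes_kK$ together with Proposition~\ref{dich-ext}. The additional commentary you give on where the work actually lies (separable functors, Fact~\ref{relative}, Lemma~\ref{fin-lim}) is accurate and consistent with the paper's development.
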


In view of Proposition \ref{dich-A-kb} and Corollary \ref{equ}, we conclude from Theorem \ref{main} the following useful corollary.
\begin{cor}
	Let $A$ be a $k$-algebra and $K/k$ be a finite separable extension such that $K$ is algebraically closed. Then the following statements hold.\\
	{\rm (1)} The second Brauer-Thrall type theorem holds for $A$.\\
	{\rm (2)} The category $\kb(A\proj)$ is either discrete or of strongly unbounded type.\\
	{\rm (3)} The algebra $A$ is derived-discrete if and only if $\kb(A\proj)$ is discrete if and only if $\cm_m(A\proj)$ is of finite representation type for each $m\geq 1$.\\
	{\rm (4)} The algebra $A$ is strongly derived-unbounded if and only if $\kb(A\proj)$ is of strongly unbounded type if and only if $\cm_m(A\proj)$ is of strongly unbounded type for some $m\geq 1$.
\end{cor}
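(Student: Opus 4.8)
The plan is to deduce all four statements as formal consequences of the preceding Theorem, which supplies the single nontrivial input: that $A$ is $\C$-dichotomic. Once $\C$-dichotomy is in hand, parts (1)--(4) require no new argument about complexes, homotopy categories, derived categories, or field extensions; they are exactly the conclusions already packaged in Proposition~\ref{dich-A-kb} and Corollary~\ref{equ}, specialized to the present algebra. Thus the proof amounts to citing these two results in the correct order.

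First I would record that the hypotheses---$K/k$ finite separable with $K$ algebraically closed---are precisely those of the Theorem, so the Theorem applies verbatim and yields that $A$ is $\C$-dichotomic. This is the only place where the separability of $K/k$ and the algebraic closedness of $K$ enter the argument; everything below is purely formal and valid for any $\C$-dichotomic $k$-algebra.

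Next I would obtain parts (2) and (1) from Proposition~\ref{dich-A-kb}. Since $A$ is $\C$-dichotomic, the first conclusion of that proposition gives that $\kb(A\proj)$ is either discrete or of strongly unbounded type, which is exactly (2); its second conclusion gives that the second derived Brauer-Thrall type theorem holds for $A$, which is (1). Finally, parts (3) and (4) are word-for-word the two chains of equivalences in Corollary~\ref{equ}(1) and Corollary~\ref{equ}(2) respectively, both of which hold for an arbitrary $\C$-dichotomic $k$-algebra; applying them to our $A$ completes the proof.

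I do not expect a genuine obstacle here, since all the representation-theoretic content has been absorbed into the Theorem and into the two earlier structural results, leaving only bookkeeping. The one point requiring a moment's care is terminology: matching the ``second Brauer-Thrall type theorem'' named in (1) with the ``second derived Brauer-Thrall type theorem'' established in Proposition~\ref{dich-A-kb}, and confirming that the equivalences of Corollary~\ref{equ} are indeed stated for arbitrary $\C$-dichotomic algebras, so that no extra hypothesis on $k$ is silently invoked beyond what the Theorem already provides.
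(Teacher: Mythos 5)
Your proposal is correct and matches the paper's own argument exactly: the paper derives this corollary by combining the preceding Theorem (which gives that $A$ is $\C$-dichotomic) with Proposition~\ref{dich-A-kb} for parts (1) and (2) and Corollary~\ref{equ} for parts (3) and (4). Nothing further is needed.
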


\begin{cor}
	Let $A$ be a finite-dimensional algebra over the real number field. Then the second Brauer-Thrall type theorem holds for $A$.
\end{cor}

\begin{rem}
	{\rm (1)} In Theorem \ref{main}, the condition that $K/k$ is separable is necessary; see the example in \cite[Remark 3.4]{JL82}.\\
	{\rm (2)} It is still unknown whether Theorem \ref{main} is true or not if $K/k$ is a MacLane-separable infinite field extension; see \cite{JL82}). \\
	{\rm (3)} Let $k$ be a finite field with $K$ its algebraic closure and $Q$ be a Kronecker quiver. Then $kQ$ is derived-discrete while $KQ$ is not.
\end{rem}

\subsection*{Acknowledgements}
Both authors are grateful to Professor Xiao-Wu Chen for his encouragement and advices. Also, they would like to thank the referees for their helpful suggestions. This work is supported by National Natural Science Foundation of China (Grant Nos. 11961007) and Science Technology Foundation of Guizhou Province (Grant Nos. [2020]1Y405).


\normalsize

\end{document}